\providecommand{\U}[1]{\protect \rule{.1in}{.1in}}
\newtheorem{theorem}{Theorem}
\newtheorem{corollary}[theorem]{Corollary}
\newtheorem{lemma}[theorem]{Lemma}
\newenvironment{proof}[1][Proof]{\noindent \textbf{#1.} }{\  \rule{0.5em}{0.5em}}
\begin{document}

\begin{center}
{\LARGE A remarkable congruence involving Appell polynomials}

\  \ 

{\Large Abdelkader Benyattou } \  \\[0pt]
\  \  \\[0pt]
UNIV-DJELFA, Dep. of Mathematics and Informatics, RECITS Laboratory, Djelfa,
Algeria.

a.benyattou@univ-djelfa.dz, abdelkaderbenyattou@gmail.com \  \  \ 

\  \  \  \  \ 

\textbf{Abstract}
\end{center}

\begin{quote}
In this paper, by the umbral calculus method, we give a remarkable
congruence involving Appell polynomials. Some applications on derangement
polynomials are also presented.
\end{quote}

\noindent \textbf{Keywords.} Congruence, Appell polynomials, classical
umbral calculus.

\noindent \textbf{2010 MSC.} 11B73, 11A07, 05A40.

\section{Introduction}
Let $\left( \mathcal{A}_{n}\left( x\right) \right) $ be a sequence of Appell
polynomials with integer coefficients defined by \cite{Ap}: 
\begin{equation*}
\sum_{n\geq 0}\mathcal{A}_{n}\left( x\right) \frac{t^{n}}{n!}=F\left(
t\right) \exp \left( xt\right) ,\text{ \ }%
\mathcal{A}_{n}\left( 0\right)=\mathcal{A}_{n} \in \mathbb{N},  \label{h}
\end{equation*}%
where \begin{equation*}
F\left( t\right)=1+\sum _{{n\geq 1}} \mathcal{A}_{n} \frac{t^{n}}{n!},
\end{equation*} Let $\mathbf{A}$ be the Appell umbra defined
by $\mathbf{A}^{n}=\mathcal{A}_{n},$ then we can define the generalized Appell umbra  $\mathbf{A}_{\mathbf{x}}$ as follows   \begin{equation*}
\mathbf{A}_{\mathbf{x}}^{n}=\mathcal{A}_{n}\left( x\right)=\left(\mathbf{A}+x\right) ^{n} 
\end{equation*}
see \cite{Mihoubi}. 
For more information on the umbral calculus see \cite{Ben,ges,rota,rom}%
. In the remainder of this paper, for any polynomials $f$ and $g$, we denote
by $f(x)\equiv g(x)$ to mean $f(x)\equiv g(x)$ (mod $\mathbb{Z}[x]$)
and for any numbers $a$ and $b$ by $a\equiv b$ we mean $a\equiv b$ (mod $p$).

\section{Congruences involving Appell polynomials }
Main result is given by the following theorem.
\begin{theorem}
\label{T} Let $f$ be a polynomial in $\mathbb{Z}\left[ x\right] ,$ $m\geq
0,s\geq 1$ be integers and $p$ be an odd prime number. Then if for any
integer $n\geq 0,$ there exists an integer $t$ such that $\mathcal{A}%
_{n+p} \equiv t\mathcal{A}_{n},$ we have the following congruence 
\begin{equation*}
\left( \mathbf{A}+x\right) ^{mp^{s}}f\left( \mathbf{A}+x\right) \equiv
\left( x^{p^{s}}+t\right) ^{m}f\left( \mathbf{A}+x\right) .  \label{i}
\end{equation*}%
This congruence is equivalent when $f\left( x\right) =x^{n}$ to 
\begin{equation*}
\mathcal{A}_{n+mp^{s}}\left( x\right) \equiv \left( x^{p^{s}}+t\right) ^{m}%
\mathcal{A}_{n}\left( x\right) .  \label{j}
\end{equation*}%
In particular for $n=0$, we have%
\begin{equation*}
\mathcal{A}_{mp^{s}}\left( x\right) \equiv \left( x^{p^{s}}+t\right) ^{m}.
\label{k}
\end{equation*}
\end{theorem}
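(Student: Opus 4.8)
The plan is to repackage the umbral evaluation as a genuine linear map and then reduce the whole statement, modulo $p$, to a one-line Frobenius computation in a small quotient ring. Write $f(y)=\sum_{j}c_{j}y^{j}\in\mathbb{Z}[y]$ and work in the polynomial ring $\mathbb{Z}_{p}[x][\mathbf{A}]$, on which I introduce the $\mathbb{Z}_{p}[x]$-linear map $L\colon\mathbb{Z}_{p}[x][\mathbf{A}]\to\mathbb{Z}_{p}[x]$ defined on monomials by $L[x^{i}\mathbf{A}^{j}]=x^{i}\mathcal{A}_{j}$. Then $L[(\mathbf{A}+x)^{n}]=\sum_{k}\binom{n}{k}\mathcal{A}_{k}x^{n-k}=\mathcal{A}_{n}(x)$, so that $L[(\mathbf{A}+x)^{mp^{s}}f(\mathbf{A}+x)]=\sum_{j}c_{j}\mathcal{A}_{mp^{s}+j}(x)$ while $L[(x^{p^{s}}+t)^{m}f(\mathbf{A}+x)]=(x^{p^{s}}+t)^{m}\sum_{j}c_{j}\mathcal{A}_{j}(x)$, the factor $(x^{p^{s}}+t)^{m}$ being a scalar for $L$. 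Hence the asserted congruence is precisely the statement that $L$ sends $(\mathbf{A}+x)^{mp^{s}}f(\mathbf{A}+x)-(x^{p^{s}}+t)^{m}f(\mathbf{A}+x)$ into $p\mathbb{Z}_{p}[x]$, and the two displayed specializations ($f(x)=x^{n}$, then $n=0$) follow immediately once the general congruence is known.

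The second step is to show that $L$ descends modulo $p$ to the quotient ring $R:=\mathbb{F}_{p}[x][\mathbf{A}]/(\mathbf{A}^{p}-\bar t)$, where $\bar t$ denotes the residue of $t$. Reduction modulo $p$ turns $L$ into an $\mathbb{F}_{p}[x]$-linear map $\widetilde{L}\colon\mathbb{F}_{p}[x][\mathbf{A}]\to\mathbb{F}_{p}[x]$ with $\widetilde{L}[x^{i}\mathbf{A}^{j}]=x^{i}\overline{\mathcal{A}}_{j}$, and on a typical generator $x^{i}\mathbf{A}^{j}(\mathbf{A}^{p}-\bar t)$ of the ideal $(\mathbf{A}^{p}-\bar t)$ one gets $\widetilde{L}[x^{i}(\mathbf{A}^{j+p}-\bar t\,\mathbf{A}^{j})]=x^{i}(\overline{\mathcal{A}}_{j+p}-\bar t\,\overline{\mathcal{A}}_{j})=0$ by the hypothesis $\mathcal{A}_{n+p}\equiv t\mathcal{A}_{n}$. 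So $\widetilde{L}$ factors through a well-defined $\mathbb{F}_{p}[x]$-linear map $\bar{L}\colon R\to\mathbb{F}_{p}[x]$, and for every $g\in\mathbb{Z}_{p}[x][\mathbf{A}]$ one has $L[g]\equiv\bar{L}(\bar{g})\pmod{p\mathbb{Z}_{p}[x]}$, where $\bar g$ is the image of $g$ in $R$. Here one must only ever use linearity of $L$ over $\mathbb{Z}_{p}[x]$: $L$ is not multiplicative in the $\mathbf{A}$-variable, so all multiplications must be carried out inside $R$ before $\bar L$ is applied.

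It then suffices to establish the identity $(\mathbf{A}+x)^{mp^{s}}f(\mathbf{A}+x)=(x^{p^{s}}+\bar t)^{m}f(\mathbf{A}+x)$ in $R$ and apply $\bar L$. Since $R$ is a commutative $\mathbb{F}_{p}$-algebra, the Frobenius endomorphism is additive, so $(\mathbf{A}+x)^{p^{s}}=\mathbf{A}^{p^{s}}+x^{p^{s}}$; moreover $\mathbf{A}^{p^{s}}=(\mathbf{A}^{p})^{p^{s-1}}=\bar t^{\,p^{s-1}}=\bar t$ by Fermat's little theorem. Hence $(\mathbf{A}+x)^{p^{s}}=x^{p^{s}}+\bar t$ in $R$; raising to the $m$-th power and multiplying by $f(\mathbf{A}+x)$ yields the identity. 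Applying $\bar L$ and unwinding via the previous paragraph gives $\sum_{j}c_{j}\mathcal{A}_{mp^{s}+j}(x)\equiv(x^{p^{s}}+t)^{m}\sum_{j}c_{j}\mathcal{A}_{j}(x)\pmod{p\mathbb{Z}_{p}[x]}$, which is the theorem.

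The only genuinely delicate point — and the one I would take the most care over — is the descent in the middle paragraph: checking that the umbral evaluation is compatible with passing to the quotient by $(p,\mathbf{A}^{p}-t)$, which is exactly where the hypothesis enters and where the non-multiplicativity of $L$ could cause a slip if one is careless. (I note in passing that the argument does not seem to use the oddness of $p$; presumably that hypothesis is needed only for the applications.)
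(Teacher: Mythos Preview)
Your proof is correct, and it takes a genuinely different route from the paper's. The paper stays within the bare umbral formalism and runs a double induction: first on $s$ (with base case $s=1$ handled by expanding $(\mathbf{A}+x)^{p}\equiv\mathbf{A}^{p}+x^{p}$ and invoking the hypothesis termwise, and the step $s\to s+1$ handled by writing $\mathbf{A}_{x}^{p^{s+1}}-t\equiv(\mathbf{A}_{x}^{p^{s}}-t)^{p}$ via $t^{p}\equiv t$ and peeling off one factor at a time), and then a second short induction on $m$. You instead make the umbral evaluation explicit as a $\mathbb{Z}_{p}[x]$-linear functional, observe that the hypothesis is exactly what is needed for it to descend to $R=\mathbb{F}_{p}[x][\mathbf{A}]/(\mathbf{A}^{p}-\bar t)$, and then do a single Frobenius computation in $R$ that handles all $s$ and $m$ at once. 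Your approach is cleaner and isolates precisely where the hypothesis is used (the descent step), at the cost of introducing a little ring-theoretic language; the paper's approach is more hands-on but requires tracking that umbral ``substitutions'' inside larger unevaluated expressions are legitimate, which your linear-functional framing makes automatic. Your closing remark is also correct: nothing in either argument uses $p$ odd.
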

\begin{proof}
It suffices to take $f\left( x\right) =x^{n}.$ For $m=1$ we proceed by
induction on $s.$ Indeed, for $s=1$ we have 
\begin{align*}
\left( \mathbf{A}_{\mathbf{x}}^{p}-t\right) \mathbf{A}_{\mathbf{x}}^{n}& =%
\mathbf{A}_{\mathbf{x}}^{n+p}-t\mathbf{A}_{\mathbf{x}}^{n} \\
& =\left( \mathbf{A}+x\right) ^{p}\left( \mathbf{A}+x\right) ^{n}-t\mathbf{A}%
_{\mathbf{x}}^{n} \\
& \equiv \left( \mathbf{A}^{p}+x^{p}\right) \left( \mathbf{A}+x\right) ^{n}-t%
\mathbf{A}_{\mathbf{x}}^{n} \\
& =x^{p}\mathbf{A}_{\mathbf{x}}^{n}+\sum_{k=0}^{n}{\binom{n}{k}}\mathbf{A}%
^{n+p-k}x^{k}-t\mathbf{A}_{\mathbf{x}}^{n} \\
& \equiv x^{p}\mathbf{A}_{\mathbf{x}}^{n}+t\sum_{k=0}^{n}{\binom{n}{k}}%
\mathbf{A}^{n-k}x^{k}-t\mathbf{A}_{\mathbf{x}}^{n} \\
& =x^{p}\mathbf{A}_{\mathbf{x}}^{n}.
\end{align*}%
Assume it is true for $s\geq 1.$ Then we have 
\begin{align*}
\mathbf{A}_{\mathbf{x}}^{n}\left( \mathbf{A}_{\mathbf{x}}^{p^{s+1}}-t\right) &
=\mathbf{A}_{\mathbf{x}}^{n}\left( \left( \mathbf{A}_{\mathbf{x}%
}^{p^{s}}-t+t\right) ^{p}-t\right) \\
& =\mathbf{A}_{\mathbf{x}}^{n}\left( \left( \mathbf{A}_{\mathbf{x}%
}^{p^{s}}-t\right) ^{p}+t-t\right) \\
& \equiv \mathbf{A}_{\mathbf{x}}^{n}\left( \mathbf{A}_{\mathbf{x}%
}^{p^{s}}-t\right) ^{p} \\
& =\left[ \mathbf{A}_{\mathbf{x}}^{n}\left( \mathbf{A}_{\mathbf{x}%
}^{p^{s}}-t\right) \right] \left( \mathbf{A}_{\mathbf{x}}^{p^{s}}-t\right)
^{p-1} \\
& \equiv x^{p^{s}}\mathbf{A}_{\mathbf{x}}^{n}\left( \mathbf{A}_{\mathbf{x}%
}^{p^{s}}-t\right) ^{p-1} \\
& =x^{p^{s}}\left[ \mathbf{A}_{\mathbf{x}}^{n}\left( \mathbf{A}_{\mathbf{x}%
}^{p^{s}}-t\right) \right] \left( \mathbf{A}_{\mathbf{x}}^{p^{s}}-t\right)
^{p-2} \\
& \equiv x^{2p^{s}}\mathbf{A}_{\mathbf{x}}^{n}\left( \mathbf{A}_{\mathbf{x}%
}^{p^{s}}-t\right) ^{p-2} \\
& \  \  \vdots \\
& \equiv \left( x^{p^{s}}\right) ^{p}\mathbf{A}_{\mathbf{x}}^{n} \\
& =x^{p^{s+1}}\mathbf{A}_{\mathbf{x}}^{n}.
\end{align*}%
So, we proved that $\left( \mathbf{A}+x\right) ^{p^{s}}f\left( \mathbf{A}%
+x\right) \equiv \left( x^{p^{s}}+t\right) f\left( \mathbf{A}+x\right) .$
For $m\geq 1$ we use this last congruence to obtain 
\begin{align*}
\left( \mathbf{A}+x\right) ^{mp^{s}}f\left( \mathbf{A}+x\right) & =\left( 
\mathbf{A}+x\right) ^{p^{s}}\left( \mathbf{A}+x\right) ^{\left( m-1\right)
p^{s}}f\left( \mathbf{A}+x\right) \\
& \equiv \left( x^{p^{s}}+t\right) \left( \mathbf{A}+x\right) ^{\left(
m-1\right) p^{s}}f\left( \mathbf{A}+x\right) \\
& \  \  \vdots \\
& \equiv \left( x^{p^{s}}+t\right) ^{m}f\left( \mathbf{A}+x\right) .
\end{align*}
\end{proof}
\begin{corollary}
Let $f$ be a polynomial in $\mathbb{Z}\left[ x\right] ,$ $m\geq 0,$ $s\geq 1$
be integers and $p$ be an odd prime number. Then if for any integer $n\geq
0, $ there exists an integer $t$ such that $\mathcal{A}_{n+p} \equiv t\mathcal{A}_{n}$ $,$ there holds%
\begin{equation*}
\left( \mathbf{A}+x\right) ^{m_{1}p+\cdots +m_{s}p^{s}}f\left( \mathbf{A}%
+x\right) \equiv \left( x^{p}+t\right) ^{m_{1}}\cdots \left(
x^{p^{s}}+t\right) ^{m_{s}}f\left( \mathbf{A}+x\right) .  \label{l}
\end{equation*}%
This congruence is equivalent when $f\left( x\right) =x^{m_{0}}$ to 
\begin{equation*}
\mathcal{A}_{m_{0}+{m_{1}p+\cdots +m_{s}p^{s}}}\left( x\right) \equiv \left(
x^{p}+t\right) ^{m_{1}}\cdots \left( x^{p^{s}}+t\right) ^{m_{s}}\mathcal{A}%
_{m_{0}}\left( x\right) .  \label{m}
\end{equation*}%
In particular, for $m_{0}=0,$ we have%
\begin{equation*}
\mathcal{A}_{{m_{1}p+\cdots +m_{s}p^{s}}}\left( x\right) \equiv \left(
x^{p}+t\right) ^{m_{1}}\cdots \left( x^{p^{s}}+t\right) ^{m_{s}}.  \label{n}
\end{equation*}
\end{corollary}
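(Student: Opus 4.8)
The plan is to obtain the corollary from Theorem~\ref{T} by a straightforward induction on $s$, at each step peeling off one block $(\mathbf{A}+x)^{m_jp^{j}}$ and replacing it by the polynomial factor $(x^{p^{j}}+t)^{m_j}$. Beyond the theorem itself, only two elementary bookkeeping facts are needed: first, the umbral congruence $\equiv$ (that is, congruence modulo $p\mathbb{Z}_p[x]$) is preserved under multiplication by an arbitrary polynomial in $\mathbb{Z}[x]$; and second, by linearity of the umbral evaluation, for every polynomial $f\in\mathbb{Z}[x]$ and every integer $M\geq 0$ the expression $(\mathbf{A}+x)^{M}f(\mathbf{A}+x)$ equals $g(\mathbf{A}+x)$ for the polynomial $g(y)=y^{M}f(y)\in\mathbb{Z}[x]$, so Theorem~\ref{T} may be re-applied to such an expression verbatim. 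Note also that the hypothesis of the corollary is exactly the hypothesis of Theorem~\ref{T}, so the theorem is available with the same $t$.

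First I would dispose of the base case $s=1$: there $m_1p+\cdots+m_sp^{s}=m_1p$, and the asserted congruence is precisely Theorem~\ref{T} with exponent $m=m_1$ and level $s=1$. For the inductive step, assume the result holds at level $s-1$ for every polynomial in $\mathbb{Z}[x]$ and all nonnegative integers $m_1,\dots,m_{s-1}$. Given $m_1,\dots,m_s$, put $N'=m_1p+\cdots+m_{s-1}p^{s-1}$ and $g(y)=y^{N'}f(y)\in\mathbb{Z}[x]$, so that $(\mathbf{A}+x)^{m_1p+\cdots+m_sp^{s}}f(\mathbf{A}+x)=(\mathbf{A}+x)^{m_sp^{s}}g(\mathbf{A}+x)$. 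Applying Theorem~\ref{T} to $g$ with exponent $m=m_s$ gives $(\mathbf{A}+x)^{m_sp^{s}}g(\mathbf{A}+x)\equiv (x^{p^{s}}+t)^{m_s}g(\mathbf{A}+x)=(x^{p^{s}}+t)^{m_s}(\mathbf{A}+x)^{N'}f(\mathbf{A}+x)$. Now I would invoke the induction hypothesis on $(\mathbf{A}+x)^{N'}f(\mathbf{A}+x)$ and multiply the resulting congruence through by the polynomial $(x^{p^{s}}+t)^{m_s}$, which is legitimate by the first bookkeeping fact; chaining the two congruences yields $(\mathbf{A}+x)^{m_1p+\cdots+m_sp^{s}}f(\mathbf{A}+x)\equiv (x^p+t)^{m_1}\cdots(x^{p^{s}}+t)^{m_s}f(\mathbf{A}+x)$, completing the induction.

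It then remains only to specialize. Taking $f(x)=x^{m_0}$ and using $(\mathbf{A}+x)^{k}=\mathbf{A}_{\mathbf{x}}^{k}=\mathcal{A}_{k}(x)$ converts the left-hand side into $\mathcal{A}_{m_0+m_1p+\cdots+m_sp^{s}}(x)$ and the right-hand side into $(x^p+t)^{m_1}\cdots(x^{p^{s}}+t)^{m_s}\mathcal{A}_{m_0}(x)$, and the case $m_0=0$ follows from $\mathcal{A}_{0}(x)=\mathbf{A}^{0}=1$. I do not anticipate a genuine obstacle here: the argument is pure bookkeeping on top of Theorem~\ref{T}. The one point deserving a moment's care is the second fact mentioned above, namely checking that after peeling off a block the remaining umbral expression is again honestly of the form ``(polynomial with integer coefficients) evaluated at $\mathbf{A}+x$'', so that Theorem~\ref{T} applies without modification, together with the routine observation that umbral congruences may be multiplied through by polynomial factors.
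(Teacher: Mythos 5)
Your proposal is correct and follows essentially the same route as the paper: both repeatedly apply Theorem~\ref{T} to peel off one block $(\mathbf{A}+x)^{m_jp^{j}}$ at a time and replace it by $(x^{p^{j}}+t)^{m_j}$, the only cosmetic differences being that you formalize the iteration as an induction on $s$ (peeling the largest block first, where the paper peels $m_1p$ first) and spell out the bookkeeping facts the paper leaves implicit.
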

\begin{proof}
By the Theorem \ref{T} we can write 
\begin{align*}
\left( \mathbf{A}+x\right) ^{m_{1}p+\cdots +m_{s}p^{s}}f\left( \mathbf{A}%
+x\right) & =\left( \mathbf{A}+x\right) ^{m_{1}p}\left( \mathbf{A}+x\right)
^{m_{2}p^{2}+\cdots +m_{s}p^{s}}f\left( \mathbf{A}+x\right) \\
& \equiv \left( x^{p}+t\right) ^{m_{1}}\left( \mathbf{A}+x\right)
^{m_{2}p^{2}+\cdots +m_{s}p^{s}}f\left( \mathbf{A}+x\right) \\
& \  \  \vdots \\
& \equiv \left( x^{p}+t\right) ^{m_{1}}\cdots \left( x^{p^{s}}+t\right)
^{m_{s}}f\left( \mathbf{A}+x\right) .
\end{align*}
\end{proof}
\section{Applications }
In this section we give some applications of Theorem \ref{T}. \\
The derangement polynomials are defined by%
\begin{equation*}
\mathcal{D}_{n}\left( x\right) =\sum_{k=0}^{n}{\binom{n}{k}}\mathcal{D}%
_{n-k}x^{k}=\sum_{k=0}^{n}{\binom{n}{k}}k!\left( x-1\right) ^{n-k},
\label{o}
\end{equation*}%
where $\mathcal{D}_{n}=\mathcal{D}_{n}\left( 0\right) $ is the $n$-th derangement number, denoted by $\mathcal{D}_{n}$ counting the number of
permutation of the set $\left[ n\right] $ without a fixed point. With exponential generating series $$\sum_{i=0}^{\infty} \mathcal{D}_{n} \left( x\right)\frac{t^{n}}{n!} =\frac{e^{-t}}{1-t}  e^{xt}  .$$  Recall that for any integer $n\geq 2,$ we have%
\begin{equation} \label{e}
\mathcal{D}_{n}\left( x\right) =n\mathcal{D}_{n-1}\left( x\right) +\left(x
-1\right) ^{n}
\end{equation}
 The derangement numbers satisfy the following congruence \cite{sun2}. For any non-negative integer $n$ and any prime number $p,$   $$\mathcal{D}_{n+p}\equiv -\mathcal{D}_{n}.$$ \\ Let $D$ be the derangement umbra defined by $D^{n}=\mathcal{D}_{n}$ and   $\mathbf{D}_{\mathbf{x}}$ be the generalized derangement umbra defined by $\mathbf{D}_{\mathbf{x}}^{n}=\left( D+x\right) ^{n}=\mathcal{D}_{n}\left( x\right).$  
\begin{lemma}  \label{L}
Let $g\left( x\right) =\sum_{k=2}^{n} a_{k}x^{k},$ be a polynomial with real coefficients. Then $$g\left(\mathbf{D}_{\mathbf{x}}\right) = g'\left(\mathbf{D}_{\mathbf{x}}\right)+\sum_{k=2}^{n} a_{k}\left( x-1\right) ^{k}.$$  where $g^{\prime }$ is the derivative polynomial.
\end{lemma}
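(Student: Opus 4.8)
The plan is to reduce the statement to the linearity of the umbral evaluation $\mathbf{D}_{\mathbf{x}}^{k}\mapsto \mathcal{D}_{k}(x)$ together with the three-term recurrence $\mathcal{D}_{n}(x)=n\mathcal{D}_{n-1}(x)+(x-1)^{n}$ recalled above, which I read umbrally as $\mathbf{D}_{\mathbf{x}}^{k}=k\mathbf{D}_{\mathbf{x}}^{k-1}+(x-1)^{k}$, valid for every $k\geq 2$. First I would write $g(\mathbf{D}_{\mathbf{x}})=\sum_{k=3}^{n}a_{k}\mathbf{D}_{\mathbf{x}}^{k}$ directly from the definition of the generalized derangement umbra.

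Next, since every exponent occurring in the sum satisfies $k\geq 3\geq 2$, I would substitute the recurrence into each term and split the resulting sum:
\[
g(\mathbf{D}_{\mathbf{x}})=\sum_{k=3}^{n}a_{k}\bigl(k\mathbf{D}_{\mathbf{x}}^{k-1}+(x-1)^{k}\bigr)=\sum_{k=3}^{n}k\,a_{k}\,\mathbf{D}_{\mathbf{x}}^{k-1}+\sum_{k=3}^{n}a_{k}(x-1)^{k}.
\]
Finally I would observe that $g'(x)=\sum_{k=3}^{n}k\,a_{k}\,x^{k-1}$, so the first sum on the right is precisely $g'(\mathbf{D}_{\mathbf{x}})$, which gives the claimed identity.

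The computation itself is short; the only point that deserves attention — and thus the main obstacle, mild as it is — is justifying that the recurrence may be substituted term by term underneath the umbral symbol. This is legitimate because the evaluation sending $\mathbf{D}_{\mathbf{x}}^{k}$ to $\mathcal{D}_{k}(x)$ is linear on polynomials in $\mathbf{D}_{\mathbf{x}}$, so an identity valid for each monomial $\mathbf{D}_{\mathbf{x}}^{k}$ can be combined against the coefficients $a_{k}$; moreover the hypothesis $k\geq 3$ in the statement is in fact stronger than needed, since the recurrence already holds for $k\geq 2$, so no boundary term requires separate treatment.
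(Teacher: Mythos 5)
Your proof is correct and follows essentially the same route as the paper: expand $g(\mathbf{D}_{\mathbf{x}})$ by linearity, substitute the umbral form $\mathbf{D}_{\mathbf{x}}^{k}=k\mathbf{D}_{\mathbf{x}}^{k-1}+(x-1)^{k}$ of the recurrence into each term, and recognize the resulting sum $\sum_{k}k\,a_{k}\mathbf{D}_{\mathbf{x}}^{k-1}$ as $g'(\mathbf{D}_{\mathbf{x}})$. Your added remark on the linearity of the umbral evaluation is a harmless refinement of the paper's argument, not a deviation from it.
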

\begin{proof}
We have
\begin{align*}
g\left(\mathbf{D}_{\mathbf{x}}\right) &=\sum_{k=2}^{n} a_{k}\left(\mathbf{D}_{\mathbf{x}}\right)^{k} \\ & =\sum_{k=2}^{n} a_{k}\left( k\mathbf{D}_{\mathbf{x}}^{k-1}+\left( x-1\right) ^{k}\right)\\ =& \sum_{k=2}^{n} a_{k} k\mathbf{D}_{\mathbf{x}}^{k-1}+\sum_{k=2}^{n} a_{k}\left( x-1\right) ^{k}.
\end{align*}
\end{proof} When we replace $ g\left( x\right)  $ in lemma \ref{L},by $ x^{n} $ we obtain the identity (\ref{e})
\begin{corollary}
\label{C1} For any integers $n\geq 1,$ $s\geq 1,$ $m\geq 0$ and for any
prime number $p\geq 3,$ there holds%
\begin{equation}
\mathcal{D}_{n+mp^{s}}\left( x\right) \equiv \left( x^{p^{s}}-1\right) ^{m}%
\mathcal{D}_{n}\left( x\right) .  \label{a}
\end{equation}%
For $x=0,$we obtain 
\begin{equation*}
\mathcal{D}_{n+mp^{s}}\equiv \left( -1\right) ^{m}\mathcal{D}_{n}  \label{f}
\end{equation*}%

\end{corollary}
\begin{proof}
The congruence (\ref{a}) follows by setting $f\left( x\right) =x^{n}$ and $t=-1$ in Theorem \ref{T}
\end{proof}
\begin{corollary}
For any prime number $p\geq 3$ and any integers $s\geq 1,m_{0},\cdots
,m_{s}\in \{0,\ldots ,p-1\},$ there holds%
\begin{equation*}
\mathcal{D}_{m_{0}+m_{1}p+\cdots +m_{s}p^{s}}\left( x\right) \equiv \left(
x^{p}-1\right) ^{m_{1}}\left( x^{p^{2}}-1\right) ^{m_{2}}\cdots \left(
x^{p^{s}}-1\right) ^{m_{s}}\mathcal{D}_{m_{0}}\left( x\right) .  \label{s}
\end{equation*}%
In particular, we have%
\begin{align*}
\mathcal{D}_{m_{1}p+\cdots +m_{s}p^{s}}\left( x\right) & \equiv \left(
x^{p}-1\right) ^{m_{1}}\left( x^{p^{2}}-1\right) ^{m_{2}}\cdots \left(
x^{p^{s}}-1\right) ^{m_{s}},   \\
\mathcal{D}_{m_{1}p+\cdots +m_{s}p^{s}}\left( k\right) & \equiv \left(
k-1\right) ^{m_{1}+m_{2}+\cdots ++m_{s}}.  
\end{align*}
\end{corollary}




\end{document}